\theoremstyle{plain}
\newtheorem{theorem}{Theorem}
\newtheorem{lemma}{Lemma}
\newtheorem{fact}{Fact}
\theoremstyle{definition}
\newtheorem{definition}{Definition}
\theoremstyle{remark}
\newtheorem{remark}{Remark}
\newcommand{\rk}{\operatorname{rank}}
\newcommand{\calH}{\mathcal{H}^{i, j}}
\newcommand{\pc}{
\begin{picture}(30,30)
\put(10,20){\vector(-1,1){10}}
\put(0,0){\vector(1,1){30}}
\put(30,0){\line(-1,1){10}}
\end{picture}
}
\newcommand{\nc}{
\begin{picture}(30,30)
\put(20,20){\vector(1,1){10}}
\put(30,0){\vector(-1,1){30}}
\put(0,0){\line(1,1){10}}
\end{picture}
}
\newcommand{\pzero}{
\begin{picture}(30,30)
\put(0,30){\vector(-1,1){1}}
\qbezier(0,0)(15,15)(0,30)
\put(30,30){\vector(1,1){1}}
\qbezier(30,0)(15,15)(30,30)
\end{picture}
}
\newcommand{\unknot}{\operatorname{unknot}}
\begin{document}
\title[On a Poincar\'{e} polynomial from Khovanov homology]{On a Poincar\'{e} polynomial from Khovanov homology and Vassiliev invariants}
\author{Noboru Ito}
\address{Graduate School of Mathematical Sciences, The University of Tokyo, 3-8-1, Komaba, Meguro-ku, Tokyo 153-8914, Japan}
\email{noboru@ms.u-tokyo.ac.jp}
\author{Masaya Kameyama}
\address{Graduate School of Mathematics, Nagoya University, Nagoya, 464-8602, Japan}
\email{m13020v@math.nagoya-u.ac.jp}
\keywords{Jones polynomial; Vassiliev invariant; Khovanov polynomial}
%; refined Chern-Simons Theory
\date{\today}
\maketitle

\begin{abstract}
We introduce a Poincar\'{e} polynomial with two-variable $t$ and $x$ for knots, derived from Khovanov homology, where the specialization $(t, x)$ $=$ $(1, -1)$ is a Vassiliev invariant of order $n$.  Since for every $n$, there exist non-trivial knots with the same value of the Vassiliev invariant of order $n$ as that of the unknot, there has been no explicit formulation of a  perturbative knot invariant which is a coefficient of $y^n$ by the replacement $q=e^y$ for the quantum parameter $q$ of a quantum knot invariant, and which distinguishes the above knots together with the unknot.  The first formulation is our polynomial.       
\end{abstract}

\section{Introduction}\label{intro}
Vassiliev \cite{vassiliev} introduces his ordered invariants by using singularity theory.  For the space $\mathcal{M}$ of all smooth maps from $S^1$ to $\mathbb{R}^3$, let $\Sigma$ be the set of maps which are not embeddings.  Then, a filtration of subgroups $\{ G_n \}_{n=1}^{\infty}$ of the reduced cohomology $\tilde{H}^0 (\mathcal{M} \setminus \Gamma)$ is introduced.   
An element in $G_n \setminus G_{n-1}$ corresponds to an oriented knot $K$ gives us a knot invariant, which is so-called a {\it Vassiliev invariant} of order $n$.   
Birman and Lin \cite{BL} give a relation between the \emph{Jones polynomial} and the Vassiliev invariant, i.e., for a one-variable polynomial $U_x (K)$ obtained from the Jones polynomial by replacing the variable with $e^x$, they  show that for a power series $
U_x (K) = \sum_{n=0}^{\infty} u_n (K) x^n 
$, each $u_n$ is a Vassiliev invariant  of order $n$ (Fact~\ref{BLthm}).  

In this paper, we consider an analogue of this Birman-Lin argument using  Khovanov homology as follows.   
For an oriented link $L$, Khovanov \cite{Khovanov} defines groups that are knot invariants and are so-called {\it Khovanov homology} $\mathcal{H}^{i, j}(L)$ such that 
\[
J(L)(q) = \sum_{i, j} (-1)^i q^j \, \rk\, \calH (L), 
\]
where $J(L)(q)$ is a version of the Jones polynomial of $L$.  It implies  the \emph{Khovanov polynomial} 
\[
\sum_{i, j} t^i q^j \, \rk\, \calH (L) 
\quad (=Kh(L)(t, q)). \]
Using each coefficient $v_{n} (K) (t, x)$ of $y^n$ in $Kh(L)(t, q)|_{q= x e^y}$, we have:    
%Here, we mention our plan of this paper.  
%In Section~\ref{sec_dfn}, we prepare definitions and notations.  
%In Section~\ref{s2}, we define a Vassiliev type invariant from the Khovanov polynomial.  
%Section~\ref{s3}, we introduce infinitely many pairs of knots $K$ and $K'$ satisfying $u_n (K)$ $=$ $u_n (K')$ for the original Vassiliev invariants and our Vassiliev type invariants $u_n (t) (K)$ $\neq$ $u_n (t) (K')$.  

\begin{theorem}\label{main}
Let $l$, $m$, and $n$ be integers where $2 \le l < m < n$.  
%Let $K$, $K_l$, and $K_m$ be oriented knots.      
Let $v_n (K) (t, x)$ be a function as in Definition~\ref{one-variable}.  Then,    
$v_n (K)(-1, 1)$ is a Vassiliev invariant of order $n$ and there exists a set $\{ K_{\mu} \}_{\mu \ge 2}$ consisting of oriented knots such that for a given tuple $(l, m, n)$, $v_n (K_l)(-1, 1)$ $=$ $v_n (K_m)(-1, 1)$ $= v_n (\unknot)(-1, 1)$ but $v_{n} (K_l) (t, x)$ $\neq$ $v_{n} (K_m) (t, x)$, $v_{n} (K_{l}) (t, x)$ $\neq$ $v_{n} (\unknot) (t, x)$, and $v_{n}  (K_{m}) (t, x)$ $\neq$ $v_{n} (\unknot) (t, x)$.  
\end{theorem}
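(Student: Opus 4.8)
The plan is to split the theorem into two independent assertions and prove them in sequence. The first assertion — that $v_n(K)(-1,1)$ is a Vassiliev invariant of order $n$ — should follow the Birman–Lin template recalled in Fact~\ref{BLthm}. The starting point is the defining relation $J(L)(q) = \sum_{i,j}(-1)^i q^j\,\rk\,\calH(L)$, which shows that the specialization $Kh(L)(-1,q)$ recovers the Jones polynomial $J(L)(q)$. Under the substitution $q = xe^y$ followed by setting $x=1$, the Khovanov polynomial $Kh(L)(t,q)|_{q=xe^y}$ at $t=-1$, $x=1$ becomes exactly the power series $U_y(L)$ obtained from the Jones polynomial by $q=e^y$. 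Consequently the coefficient of $y^n$, namely $v_n(K)(-1,1)$, coincides with the Birman–Lin coefficient $u_n(K)$, which is a Vassiliev invariant of order $n$ by Fact~\ref{BLthm}. I would make this identification precise by carefully matching the two expansions term by term, verifying that the $x$-specialization at $1$ commutes with extracting the $y^n$-coefficient and with the $t=-1$ evaluation.

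For the second assertion I need an explicit infinite family $\{K_\mu\}_{\mu\ge 2}$ of knots whose order-$n$ Vassiliev invariant agrees with that of the unknot, yet which are separated by the finer two-variable invariant $v_n(K)(t,x)$. The natural source of such a family is a sequence of knots that are Vassiliev-indistinguishable from the unknot up to the relevant order but have distinct Khovanov homology; candidates are the $(2,\mu)$-torus knots, twist knots, or a family built from connected sums and mutation, whose Khovanov homology is completely computable. I would choose $K_\mu$ so that its Jones polynomial (hence all $u_k$ for $k\le n$, and in particular $u_n$) matches the unknot's, guaranteeing $v_n(K_\mu)(-1,1) = v_n(\unknot)(-1,1)$ by the first part. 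The key point is that passing from the Jones polynomial specialization to the full Khovanov polynomial before the $x$-specialization retains the homological $t$-grading information, so that $v_n(K)(t,x)$ as a genuine two-variable polynomial detects differences invisible to the Jones-level invariant.

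The main obstacle will be exhibiting, for each prescribed triple $(l,m,n)$, knots $K_l$ and $K_m$ realizing all three inequalities $v_n(K_l)(t,x)\neq v_n(K_m)(t,x)$, $v_n(K_l)(t,x)\neq v_n(\unknot)(t,x)$, and $v_n(K_m)(t,x)\neq v_n(\unknot)(t,x)$ simultaneously, while keeping the $(-1,1)$-values pinned together. This requires a family whose Khovanov homology varies with $\mu$ in the $y^n$-coefficient of the $q=xe^y$ expansion, but whose Jones-level data is constant. I would address this by computing $Kh(K_\mu)(t,q)$ explicitly for the chosen family, extracting $v_n(K_\mu)(t,x)$ as a polynomial in $t$ and $x$, and showing that the $t$-dependence (equivalently the torsion or the off-diagonal homology contributions that cancel in the $J$-specialization but survive in $Kh$) genuinely distinguishes successive members. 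The delicate step is verifying the non-collapse of these differences after expansion in $y$ and extraction of the $n$-th coefficient; I expect to reduce this to a direct inspection of finitely many homological degrees, using the explicit grading support of the family's Khovanov homology to certify that the three required inequalities hold for every admissible $(l,m,n)$.
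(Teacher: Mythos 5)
Your first paragraph is sound and coincides with the paper's Lemma~\ref{lemma2}: setting $t=-1$, $x=1$ collapses $Kh(L)(t,xe^y)$ to $J(L)(e^y)$ and the Birman--Lin argument applies to the coefficient of $y^n$. The gap is entirely in the second half. First, your plan hinges on choosing $K_\mu$ ``so that its Jones polynomial matches the unknot's.'' That condition is both far too strong and (for nontrivial knots) not known to be achievable --- whether any nontrivial knot has trivial Jones polynomial is a well-known open problem --- so no family can be produced this way. What is actually needed is only that the single order-$n$ Vassiliev invariant $v_n(K_\mu)(-1,1)$ agree with the unknot's value, and the paper gets this from a completely different source: the knots $K_\mu$ of Figure~\ref{ntri} are the Ohyama-type ``$n$-similar'' knots \cite{ohyama}, which have highly nontrivial Jones polynomials but on which the relevant finite-order invariants vanish. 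Second, the concrete candidates you name ($(2,\mu)$-torus knots, twist knots) fail even the weaker requirement: already their order-$2$ Vassiliev invariants are nonzero, so the pinning condition $v_n(K_l)(-1,1)=v_n(\unknot)(-1,1)$ cannot hold for them.

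Third, the separation step is left as ``compute $Kh(K_\mu)$ explicitly and inspect,'' but the actual mechanism in the paper is a specific extremal-degree computation that your proposal does not contain and that an arbitrary family would not supply. For the diagram $D_m$ of $K_m$ one exhibits an enhanced state $S_m$ realizing the minimal bidegree, with $i(S_m)=-2m$ and $j(S_m)=-4m-1$, and checks that it survives to give $\mathcal{H}^{-2m,-4m-1}(K_m)=\mathbb{Z}$. By Lemma~\ref{lemma1} this forces the lowest $x$-degree of $v_n(K_m)(t,x)$ to be exactly $-4m-1$, with coefficient $\frac{(-4m-1)^n}{n!}t^{-2m}$. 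Since this lowest degree is a strictly decreasing function of $m$ and equals $-1$ for the unknot (whose Khovanov polynomial is $q+q^{-1}$, supported in $t^0$), the three polynomials $v_n(K_l)(t,x)$, $v_n(K_m)(t,x)$, $v_n(\unknot)(t,x)$ are pairwise distinct for purely degree-theoretic reasons; no case-by-case ``non-collapse'' verification in $y$ is needed. Without identifying such an invariantly detectable extremal feature, your reduction to ``inspection of finitely many homological degrees'' has no content, since you have not specified which degrees or why their contributions cannot cancel.
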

\begin{remark}\label{remark1}
If $n=3$, $l=2$, there exists an oriented knot $K_2$ such that $v_3 (K_2)(-1, 1)$ $=$ $v_3 (\unknot)(-1, 1)$ wheres $v_{3} (K_2)(t, x)$ $\neq$ $v_{3} (\unknot) (t, x)$.  The proof is placed on the end of Section~\ref{Proof_main}.  
\end{remark}
\begin{remark}
This $v_{n}(K)(t, x)$ equals $\sum_{i, j} \frac{j^n}{n!} t^i \rk \mathcal{H}^{i, j} (K)\, x^j$ (Lemma~\ref{lemma1}), which implies a triply graded homology $\mathcal{H}_n^{i, j} (K)$ by assigning $n$ to $\mathcal{H}^{i, j} (K)$ that belongs to the coefficient of $y^n$, i.e., a formula   
\[
Kh(L)(t, xe^y) = \sum_{n=0}^{\infty} \sum_{i, j}    \frac{j^n}{n!} t^i \, \rk\, \calH_n (L)\,x^j  y^n,     
\]
satisfying $Kh(L)(-1, e^y)$ $=$ $J_L (q)$ holds 
(cf.~(\ref{align1}) of the proof of Lemma~\ref{lemma1}).  
\end{remark}

To the best our knowledge, there has been no explicit formulation of a \emph{perturbative}\footnote{The word ``perturbative" comes from Chern-Simons perturbation theory. A representative physical approach to Khovanov polynomial is refined Chern-Simons theory \cite{Aganagic-Shakirov}. However, perturbative calculations can not be applied for refined Chern-Simons theory. Therefore, we emphasize that the meaning of ``perturbative" in this paper is an analogy of Birman-Lin.} knot invariant which is a coefficient of $y^n$ obtained from the replacement $q=e^y$ of the quantum parameter $q$ of a quantum knot invariant, and which distinguishes $K_{m}$ ($m \le n-1$) of Theorem~\ref{main} (Figure~\ref{ntri}) together with the unknot wheres the Vassiliev invariant cannot.  The first formulation is our two-variable Poincar\'{e} polynomial $v_{n}(K_n)(t, x)$ which is  introduced in this paper, and which is the coefficient of $y^n$ and satisfies that the specialization $v_{n}(K_n)(-1, 1)$ is a Vassiliev invariant of order $n$.    Further, it is interesting that though this polynomial invariant $v_{n}(K)(t, x)$ can detect the difference between $K_{m}$ ($m \le n-1$) and the unknot, essentially, there exists a fixed number $j_0$ such that the coefficient $x^{j_0} y^n$ detect them (here, $j_0$ is actually the lowest degree of $x$ in $v_n (K) (t, x)$).  It implies that an information of the $j$-grade of $v_{n}(K)(t, x)$ is useful (for the detail, see Section~\ref{Proof_main}). In the literature,  this usefulness of the grade implicitly appeared in a work of Kanenobu-Miyazawa \cite{Kanenobu-Miyazawa}, they showed that $V^{(n)}_{K} (1)$ is a Vassiliev invariant by using the $n$th derivative of the Jones polynomial $V_K (r)$.         

The plan of the paper is as follows.  We will prove Theorem~\ref{main} (Section~\ref{Proof_main}) after we obtain definitions and notations (Section~\ref{sec_dfn}).  
In Section~\ref{SecTable}, we give a table of our function $v_{n, j} (K) (t, x)$ and its sum $v_{n} (K) (t, x)$.

\section{Preliminaries}\label{sec_dfn}
\subsection{The Jones polynomial and the Vassiliev invariant}
%\begin{definition}[knot]
%A \emph{knot} (\emph{link}, resp.) is the image of a smooth embedding from $S^1$ ($S^1 \sqcup S^1 \sqcup \dots \sqcup S^1$, resp.) into $\mathbb{R}^3$.    Two links $L_0$ and $L_1$ are \emph{isotopic} if there exists an isotopy  $h_t : \mathbb{R}^3 \to \mathbb{R}^3$ ($t \in [0, 1]$) such that $h_0$ $=$ $\id$ and $h_1(K_0)$ $=$ $K_1$.  
%\end{definition}
\begin{definition}[normalized Jones polynomial]\label{defJones}
Let $L$ be an oriented link.  
The Jones polynomial $V_L(r)$ is well-known, which is a polynomial in $\mathbb{Z}[r^{1/2}, r^{-1/2}]$ that is determined by an isotopy class of $L$.  The Jones polynomial $V_L(r)$ is defined by  
\begin{align*}
& V_{\text{unknot}}(r)=1, \\
& r^{-1} V_{L_+}(r)-r V_{L_-}(r)=(r^{1/2} - r^{-1/2}) V_{L_0}(r).  
\end{align*}
where links $L_+$, $L_-$, and $L_0$ are defined by Figure~\ref{skein1} and where Figure~\ref{skein1} corresponds to local figures are included on a neighborhood and the exteriors of the three neighborhoods are the same.  
\begin{figure}[htbp]
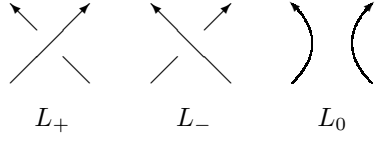

$\pc$ \qquad $\nc$ \qquad $\pzero$
\[ L_+ \qquad\qquad  L_- \qquad\qquad  L_0 \]
\caption{Local figures $L_+$, $L_-$, and $L_0$ of an oriented link.}\label{skein1}
\end{figure}
\end{definition}
\begin{definition}[unnormalized Jones polynomial]\label{verJones}
Letting $q= -r^{1/2}$, we define an unnormalized Jones polynomial $J_L (q)$ by
\[
J_L (q)|_{q= -r^{1/2}} = (-r^{1/2}-r^{-1/2}) V_L (r).  
\]
By definition, $J_L (q)$ is a polynomial in $\mathbb{Z}[q, q^{-1}]$ that is determined by an isotopy class of $L$.  Let $L_+$, $L_-$, and $L_0$ be as in Definition~\ref{defJones}.  Then,   
the polynomial $J_L (q)$ satisfies  
\begin{align*}
& J_{\text{unknot}}(q)= q + q^{-1}, \\
& q^{-2} J_{L_+}(q)- q^2 J_{L_-}(q)=(q^{-1} -q) J_{L_0}(q).  
\end{align*}
%where links $L_+$, $L_-$, and $L_0$ are defined by Figure~\ref{skein1} and where Figure~\ref{skein1} corresponds to local figures are included on a neighborhood and the exteriors of the three neighborhoods are the same.  
%\begin{figure}[htbp]
%$\pc$ \qquad $\nc$ \qquad $\pzero$
%\[ L_+ \qquad\qquad  L_- \qquad\qquad  L_0 \]
%\caption{Local figures $L_+$, $L_-$, and $L_0$ of an oriented link.}\label{skein1}
%\end{figure}
\end{definition}
\begin{fact}[Birman-Lin, Theorem of \cite{BL}]\label{BLthm}
Let $K$ be a knot and let $V_K (r)$ be its Jones polynomial as in Definition~\ref{defJones}.  Let $U_x (K)$ be obtained from $V_K (r)$ by replacing the variable $r$ by $e^x$.  Express $U_x (K)$ as a power series in $x$: 
\[
U_x (K) = \sum_{i=0}^{\infty} u_i (K) x^i.   
\] 
Then, $u_0 (K)=1$ and each $u_i (K)$, $i \ge 1$ is a Vassiliev invariant of order $i$.   
\end{fact}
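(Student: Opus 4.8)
The plan is to establish the two conclusions of Fact~\ref{BLthm} separately: the normalization $u_0(K)=1$, which is immediate, and the finite-type property of each $u_n$, which is the substance. For the normalization I would simply note that $V_K(1)=1$ for every knot $K$ (as follows from the skein relation of Definition~\ref{defJones} together with $V_{\text{unknot}}=1$), so that $u_0(K)=U_0(K)=V_K(e^0)=V_K(1)=1$.

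For the finite-type property, recall that a knot invariant counts as Vassiliev of order $n$ once it is extended to singular knots by the Vassiliev skein rule $U_x(D_\times)=U_x(D_+)-U_x(D_-)$ at each transverse double point and then vanishes on every singular knot with at least $n+1$ double points. My plan is to prove the sharper divisibility statement that, for any singular diagram $D$ with exactly $d$ double points, $U_x(D)=V_D(e^x)$ lies in $x^d\,\mathbb{Q}[[x]]$. The key local input comes from substituting $r=e^x$ into the skein relation of Definition~\ref{defJones} and solving for the difference at a single crossing:
\[
V_{L_+}(e^x)-V_{L_-}(e^x)=(e^{2x}-1)\,V_{L_-}(e^x)+e^{x}\bigl(e^{x/2}-e^{-x/2}\bigr)\,V_{L_0}(e^x).
\]
Since both $e^{2x}-1$ and $e^{x}\bigl(e^{x/2}-e^{-x/2}\bigr)$ vanish to first order at $x=0$, their power series lie in $x\,\mathbb{Q}[[x]]$.

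I would then induct on $d$. The case $d=0$ is trivial. For $d\ge 1$ I resolve one double point of $D$, so that $U_x(D)=V_{L_+}(e^x)-V_{L_-}(e^x)$ for diagrams $L_\pm$ carrying the remaining $d-1$ double points; the displayed identity rewrites this as an element of $x\,\mathbb{Q}[[x]]$ times the values on $L_-$ and on the oriented resolution $L_0$, each of which is a diagram with $d-1$ double points. By the inductive hypothesis those values lie in $x^{d-1}\,\mathbb{Q}[[x]]$, whence $U_x(D)\in x^{d}\,\mathbb{Q}[[x]]$. In particular, if a singular knot has $d\ge n+1$ double points then $U_x$ is divisible by $x^{n+1}$, so its coefficient $u_n$ of $x^n$ vanishes on it; thus $u_n$ is a Vassiliev invariant of order $n$ in the sense of type at most $n$.

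The main obstacle I anticipate is the bookkeeping of the induction rather than any single computation. One must check that the skein identity is applied at a genuine double point, so that the two diagrams $L_-$ and $L_0$ produced really carry $d-1$ double points; that the Vassiliev extension of $U_x$ is well defined, i.e. independent of the order in which the double points are resolved; and that the whole argument is run for singular \emph{links}, not only knots, since the oriented resolution $L_0$ generally has one more component than $L_\pm$. A minor point is to reconcile conventions: ``order $n$'' here should mean type at most $n$, which is exactly what the divisibility $U_x\in x^{n+1}\mathbb{Q}[[x]]$ delivers, while the separate computation $u_0=1$ identifies $u_0$ as the constant invariant.
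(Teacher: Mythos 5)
Note first that the paper offers no proof of Fact~\ref{BLthm}: it is quoted as a known theorem of Birman--Lin, and the only echo of its argument is the appeal to ``[Proof of Theorem~4.1] of Birman--Lin'' inside the proof of Lemma~\ref{lemma2}. Your argument is exactly that standard argument and it is correct: rewriting the skein relation of Definition~\ref{defJones} at $r=e^x$ as $V_{L_+}(e^x)-V_{L_-}(e^x)=(e^{2x}-1)V_{L_-}(e^x)+e^{x}(e^{x/2}-e^{-x/2})V_{L_0}(e^x)$ exhibits each crossing change as multiplication by an element of $x\,\mathbb{Q}[[x]]$, the induction on the number of double points gives $U_x(D)\in x^{d}\,\mathbb{Q}[[x]]$ and hence the vanishing of $u_n$ on singular knots with $n+1$ double points, and $u_0(K)=V_K(1)=1$ since $V$ at $r=1$ is unchanged by crossing changes. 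The caveats you list (well-definedness of the Vassiliev extension via the alternating-sum definition, running the induction over links because $L_0$ changes the number of components, and reading ``order $n$'' as type at most $n$) are the right ones and are routine to discharge.
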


\subsection{A polynomial invariant  from Khovanov polynomial}\label{s2}
%In this section, we introduce a version of Vassiliev type invariants via Khovanov polynomial.  
\begin{definition}\label{not1}
Let $L$ be a link and $\calH (L)$ the Khovanov homology group of $L$.   The Khovanov polynomial is defined by 
\[
Kh(L)(t, q) = \sum_{i, j} t^i q^j \, \rk\, \calH (L).  
\]
\end{definition}
\begin{definition}[two-variable polynomials]\label{one-variable}
Let $Kh (K) (t, q) |_{q = x e^y}$ be a polynomial obtained from the Khovanov polynomial $Kh(K) (t, q)$ by replacing the variable $q$ with $x e^y$.  Then, let $v_n (K) (t, x)$ the coefficient of $y^n$ and let $v_{n, j} (K) (t, x)$ be (the coefficient of $x^j y^n$) $\cdot$ $x^j$.    
%Then there exists a one-variable polynomial $\lambda(t)$ such that the coefficient of $y^n$ is presented as $\sum_j \lambda(t) x^j$.  Then let $v_{n} (K) (t, x)$ be $\sum_j \lambda(t) x^j$ and $v_{n} (K) (t, x)$ be $\lambda(t) x^j$.   
%\[Kh (K) (t, q) |_{q= x e^y} = \sum_{n=0}^{\infty} \left( 
%\right) \] 
\end{definition}
By definition, $v_{n} (K) (t, x)$ $=\sum_j v_{n, j} (K) (t, x)$.  
It is clear that every $v_{n, j} (K) (t, x)$ is a link invariant, which implies that $v_{n} (K) (t, x)$ is also a link invariant.    
%Then, we have 
%\begin{theorem}\label{main}
%Let $n$ be a positive integer where $n\ge 2$.    
%The integer $v_n (K) (-1)$ is a Vassiliev invariant $v_n$ of order $n$.   For a given $n$, there exists integers $l, m$ $(\le n-1)$ a pair $K_l$, $K_m$ such that $v_n (K_l) (-1)$ $=$ $v_n (K_m) (-1)$ $=0$ and $v_1 (K_l) (t)$ $\neq$ $v_1 (K_m) (t)$.  
%\end{theorem}
Definition~\ref{not1} and Definition~\ref{one-variable} imply Lemma~\ref{lemma1}.  
\begin{lemma}\label{lemma1}
\[
v_{n, j} (K)(t, x) = \frac{j^n}{n!} \sum_i t^i \rk \mathcal{H}^{i, j} (K)\, x^j.
\]
As a corollary, 
\[v_{n} (K)(t, x) = \sum_{i, j} \frac{j^n}{n!} t^i \rk \mathcal{H}^{i, j} (K)\, x^j.
\]
\end{lemma}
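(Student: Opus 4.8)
The plan is to substitute $q = xe^y$ into the defining expression for the Khovanov polynomial from Definition~\ref{not1} and expand the exponentials as power series in $y$, then read off coefficients according to Definition~\ref{one-variable}. First I would compute, using $(xe^y)^j = x^j e^{jy}$ and the Taylor expansion $e^{jy} = \sum_{n=0}^{\infty} \frac{j^n}{n!} y^n$,
\begin{align*}
Kh(K)(t, xe^y) &= \sum_{i, j} t^i (xe^y)^j \, \rk\, \calH (K) \\
&= \sum_{i, j} t^i x^j \, \rk\, \calH (K) \sum_{n=0}^{\infty} \frac{j^n}{n!} y^n.
\end{align*}
Because the Khovanov homology of a link is finitely generated and vanishes outside finitely many bidegrees $(i,j)$, the sum over $(i,j)$ is finite and may be freely interchanged with the power series in $y$, so no convergence issue arises. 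Interchanging the sums yields
\begin{align}\label{align1}
Kh(K)(t, xe^y) = \sum_{n=0}^{\infty} \Bigl( \sum_{i, j} \frac{j^n}{n!} t^i \, \rk\, \calH (K)\, x^j \Bigr) y^n.
\end{align}

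The corollary is then immediate: by Definition~\ref{one-variable}, $v_n(K)(t,x)$ is the coefficient of $y^n$ in $Kh(K)(t, xe^y)$, which (\ref{align1}) identifies as $\sum_{i,j} \frac{j^n}{n!} t^i \, \rk\, \calH (K)\, x^j$, as claimed.

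For the first (graded) statement I would track the dependence on $x$ inside the $y^n$-coefficient. Fixing $j$, the portion of the bracketed sum in (\ref{align1}) that carries $x^j$ is exactly $\sum_i \frac{j^n}{n!} t^i \, \rk\, \calH (K)$, and this is the coefficient of $x^j y^n$ in $Kh(K)(t,xe^y)$. By Definition~\ref{one-variable}, $v_{n,j}(K)(t,x)$ is this coefficient multiplied by $x^j$, giving $\frac{j^n}{n!} \sum_i t^i \, \rk\, \calH (K)\, x^j$, which is the asserted formula. Summing over $j$ recovers $v_n(K)(t,x) = \sum_j v_{n,j}(K)(t,x)$, consistent with the identity noted just after Definition~\ref{one-variable}.

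There is essentially no hard step here; the only point demanding care is the bookkeeping forced by the definition of $v_{n,j}$, in particular the convention that $v_{n,j}$ is the coefficient of $x^j y^n$ multiplied back by $x^j$ rather than that coefficient alone. After the substitution each monomial $q^j$ contributes the single power $x^j$, so the exponent of $x$ exactly records the quantum grading $j$; collecting the coefficient of $x^j$ therefore gathers precisely the classes in bidegrees $(i,j)$ for that fixed $j$, summed over the homological grading $i$. The expected main (though minor) obstacle is merely stating the interchange of summation cleanly, which the finiteness of the support of $\calH (K)$ settles at once.
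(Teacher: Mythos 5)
Your proposal is correct and follows essentially the same route as the paper: substitute $q = xe^y$, expand $e^{jy}$ as $\sum_n \frac{(jy)^n}{n!}$, interchange the (finite) sum over bidegrees with the power series in $y$, and read off the coefficient of $x^j y^n$ before multiplying back by $x^j$ per Definition~\ref{one-variable}. Your explicit remark on the finiteness of the support of $\mathcal{H}^{i,j}(K)$ justifying the interchange is a small point of added care that the paper leaves implicit.
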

\begin{proof}
\begin{align*}
Kh(L)(t, x e^y) &= \sum_{j} (e^y)^j x^j \sum_i t^i \, \rk\, \calH (L) \\
&= \sum_{j} \sum_{n=0}^{\infty} \frac{(jy)^n}{n!} x^j \sum_i t^i \, \rk\, \calH (L) \\
&= \sum_{j} \sum_{n=0}^{\infty}  \frac{j^n}{n!}  \sum_i t^i \, \rk\, \calH (L)\,x^j  y^n.
\end{align*}
Then, the coefficient of $x^j y^n$ is $\frac{j^n}{n!}  \sum_i t^i \, \rk\, \calH (L)$.  
%Thus, the coefficient of $y^n$ is 
%\[ \sum_j \frac{j^n}{n!}  \sum_i t^i \, \rk\, \calH (L) \, x^j. \]
This fact together with Definition~\ref{one-variable} of $v_{n, j} (K) (t, x)$, we have 
\[
v_{n, j} (K) (t, x) = \frac{j^n}{n!}  \sum_i t^i \, \rk\, \calH (L)\, x^j.  
\]
As a corollary, 
\[v_{n} (K)(t, x) = \sum_j v_{n, j} (K)(t, x) = \sum_{i, j} \frac{j^n}{n!}  t^i \rk \mathcal{H}^{i, j} (K)\, x^j.
\]
\end{proof}
\begin{lemma}\label{lemma2}
The integer $v_{n} (K) (-1, 1)$ is 
%the coefficient of $y^n$ in $Kh(K)(t)|_{q=e^y}$, which is 
a Vassiliev invariant of order $n$.  

As a corollary, every Vassiliev invariant of order $n$ has a presentation 
\[
v_{n} (K)(-1, 1) = \sum_j v_{n, j} (K)(-1, 1).  
\]
\end{lemma}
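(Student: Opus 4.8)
The plan is to identify $v_n(K)(-1,1)$ with a single coefficient of the exponentially-substituted Jones polynomial and then push the Birman--Lin argument (Fact~\ref{BLthm}) through the defining relation between $J_K$ and $V_K$.

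First I would specialize Lemma~\ref{lemma1} at $(t,x)=(-1,1)$. Writing $J_K(q)=\sum_j a_j q^j$ with $a_j=\sum_i(-1)^i\,\rk\,\mathcal{H}^{i,j}(K)\in\mathbb{Z}$, the identity $Kh(K)(-1,q)=J_K(q)$ recorded in Section~\ref{intro} gives
\[
v_n(K)(-1,1)=\sum_{j}\frac{j^n}{n!}\,a_j.
\]
On the other hand, expanding $J_K(e^y)=\sum_j a_j e^{jy}=\sum_{n\ge 0}\bigl(\sum_j \tfrac{j^n}{n!}a_j\bigr)y^n$ shows that this quantity is precisely the coefficient of $y^n$ in $J_K(e^y)$. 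Thus it suffices to analyze the power series $J_K(e^y)$ in $y$.

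Next I would convert $J_K(e^y)$ into Jones-polynomial data to which Fact~\ref{BLthm} applies. By Definition~\ref{verJones}, $J_K(q)=(-r^{1/2}-r^{-1/2})V_K(r)$ under $q=-r^{1/2}$. Setting $q=e^y$ forces $r^{1/2}=-e^y$, hence $r=e^{2y}$ and $r^{-1/2}=-e^{-y}$, so that
\[
J_K(e^y)=(e^y+e^{-y})\,V_K(e^{2y}).
\]
Applying Fact~\ref{BLthm} with the substitution $x=2y$ yields $V_K(e^{2y})=\sum_{i\ge0}2^i u_i(K)\,y^i$, where each $u_i$ is a Vassiliev invariant of order $i$; multiplying by $e^y+e^{-y}=2\sum_{k\ge0}\frac{y^{2k}}{(2k)!}$ and reading off the coefficient of $y^n$ gives
\[
v_n(K)(-1,1)=\sum_{\substack{i+2k=n\\ k\ge0}}\frac{2^{\,i+1}}{(2k)!}\,u_i(K).
\]

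Finally I would conclude. The right-hand side is a finite $\mathbb{Q}$-linear combination of the Vassiliev invariants $u_0,\dots,u_n$, all of order at most $n$; since a linear combination of Vassiliev invariants of order $\le n$ is again of order $\le n$, and the term $k=0$ contributes $2^{\,n+1}u_n(K)$ with nonzero coefficient, $v_n(K)(-1,1)$ is a Vassiliev invariant of order $n$. The corollary is then immediate: specializing the definitional identity $v_{n}(K)(t,x)=\sum_j v_{n,j}(K)(t,x)$ at $(t,x)=(-1,1)$ exhibits the stated presentation. I expect the only delicate point to be the bookkeeping in the substitution $q=-r^{1/2}$: one must track the sign so that $q=e^y$ produces $e^y+e^{-y}$ rather than its negative, and the relation $r=q^2$ must correctly rescale the Birman--Lin variable to $x=2y$. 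Getting this right is exactly what guarantees that the leading coefficient of $u_n$ is $2^{\,n+1}\neq 0$, so that the order-$n$ part is genuinely present.
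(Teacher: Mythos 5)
Your proof is correct, and while it starts exactly as the paper does, it completes the decisive step by a genuinely different route. Both arguments first specialize Lemma~\ref{lemma1} at $(t,x)=(-1,1)$ to identify $v_n(K)(-1,1)$ with the coefficient of $y^n$ in $J_K(e^y)$; this is precisely the paper's identity~(\ref{align1}). At that point the paper simply asserts that ``by the same argument as [BL, Proof of Theorem~4.1]'' this coefficient is a Vassiliev invariant of order $n$ --- i.e., it asks the reader to re-run Birman--Lin's singular-knot induction for the unnormalized polynomial $J_K$. You instead reduce to the \emph{statement} of Fact~\ref{BLthm}: from Definition~\ref{verJones} you derive $J_K(q)=(q+q^{-1})V_K(q^2)$, hence $J_K(e^y)=(e^y+e^{-y})V_K(e^{2y})$, and your sign bookkeeping ($r^{1/2}=-e^y$, so the prefactor is $e^y+e^{-y}$ and not its negative) is right. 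Expanding both factors gives the explicit formula $v_n(K)(-1,1)=\sum_{i+2k=n}\frac{2^{\,i+1}}{(2k)!}\,u_i(K)$, a finite linear combination of Vassiliev invariants of order at most $n$, which is again a Vassiliev invariant of order at most $n$ since these form a vector space. Your route buys self-containedness (Fact~\ref{BLthm} is used as a black box rather than through its proof technique) plus an explicit change-of-basis formula the paper does not record; the paper's route is shorter but leaves the key inductive argument to the reader. One caveat: your closing claim that the nonzero coefficient $2^{\,n+1}$ of $u_n$ guarantees the order-$n$ part is ``genuinely present'' overreaches --- Fact~\ref{BLthm} only asserts $u_n$ has order at most $n$ (for instance $u_1\equiv 0$ for the Jones polynomial), so exact order cannot be read off from a nonvanishing coefficient. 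Under the paper's convention, in which ``order $n$'' is used in the Birman--Lin sense of order at most $n$, this does not affect the correctness of your proof.
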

\begin{proof}
Using the above proof of Lemma~\ref{lemma1}, setting $x=1$ and $t=-1$, we have
\begin{align}\label{align1}
J(L)(q)|_{q={e^y}} = Kh(L)(-1, e^y) 
&= \sum_{j} \sum_{n=0}^{\infty}  \frac{j^n}{n!}  \sum_i (-1)^i \, \rk\, \calH (L)\,  y^n.
\end{align}
The coefficient of $y^n$ is $\sum_j \frac{j^n}{n!} \sum_i (-1)^i \, \rk\, \calH (L)$, which is $v_{n} (K) (-1, 1)$.  
Then, by the same argument as \cite[Proof of Theorem~4.1]{BL} of Birman-Lin, it is elementary to prove  that the coefficient of $y^n$ of $J(L)(q)|_{q={e^y}}$ is a Vassiliev invariant of order $n$.  This fact and Lemma~\ref{lemma1} imply the formula of the claim.  
\end{proof}

\section{A proof of Theorem~\ref{main}.}\label{Proof_main}
%\section{Basic properties of Vassiliev invariants}\label{s3}
Since Lemma~\ref{lemma2} holds, we should the latter part of the claim.  For this proof, we use notations and definitions of Khovanov homology as in \cite{viro}.  Although it is sufficient to use $\mathbb{Z}/2 \mathbb{Z}$-homology, here we use $\mathbb{Z}$-homology to avoid adding notations of  symbols.  
We recall that a chain group $\mathcal{C}^{i, j}(D)$ of an oriented link diagram $D$.  In particular, for each enhanced state of $\mathcal{C}^{i, j}(D)$, $i(S)=\frac{w(D)-\sigma(s)}{2}$ and  $j(S)=w(D)$ $+$ $i(S)$ $+$ $\tau(S)$ (for definition of a state $s$, an enhanced state $S$, the writhe number $w(D)$, a sum $\sigma(s)$ of signs , and a sum $\tau(S)$ of signs, see \cite{viro}).  
   
%Let $D$ be an oriented link diagram, $S$ an enhanced state of $D$, $w(D)$ the writhe number, $\sigma(S)$ (resp.~$\tau(S)$) is the difference between the umbers of pluses and minuses assigned to the smoothing (circles,~resp.) of $S$.       
%Then let $i(S)=\frac{w(D)-\sigma(S)}{2}$ and let $j(S)=w(D)$ $+$ $i(S)$ $+$ $\tau(S)$.  Let $\mathcal{C}^{i, j}(D; \mathbb{Z}/2\mathbb{Z})$ $=$ $\mathbb{Z}/2\mathbb{Z} \langle S~|~i(S)=i, j(S)=j \rangle$.  

Let $m$ be a positive integer $(m \ge 2)$ and $K_n$ a knot with a fixed $m$ that is defined by Figure~\ref{ntri}.  It is well-known that for every Vassiliev invariant $v_n$ of order $n$, $v_n (\unknot)=0$ and $v_n (K_m)=0$ ($m \le n-1$) \cite{ohyama}, which implies that $v_n (K_l)(-1, 1)$ $=$ $v_n (K_m) (-1, 1)$ $=$ $v_n (\unknot)(-1, 1)$ ($\because$ Lemma~\ref{lemma2}).  

Let $D_m$ be a knot diagram defined by Figure~\ref{ntri}, $s_n$ a state defined by Figure~\ref{nstateR}~(a), and $S_n$ a state defined by Figure~\ref{nstateR}~(b).    
\begin{figure}[h]
\includegraphics[width=5cm]{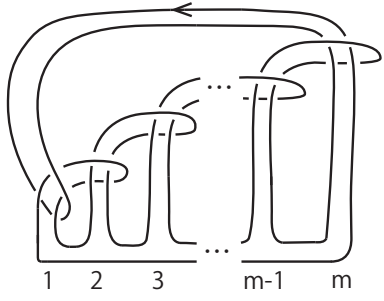}
\caption{A diagram of a knot $K_m$.}\label{ntri}
\end{figure}
%\begin{figure}[htbp]
%\includegraphics[width=5cm]{nstate.eps}
%\caption{A diagram of a knot $K_n$}\label{nstate}
%\end{figure}
\begin{figure}[h]
\includegraphics[width=12cm]{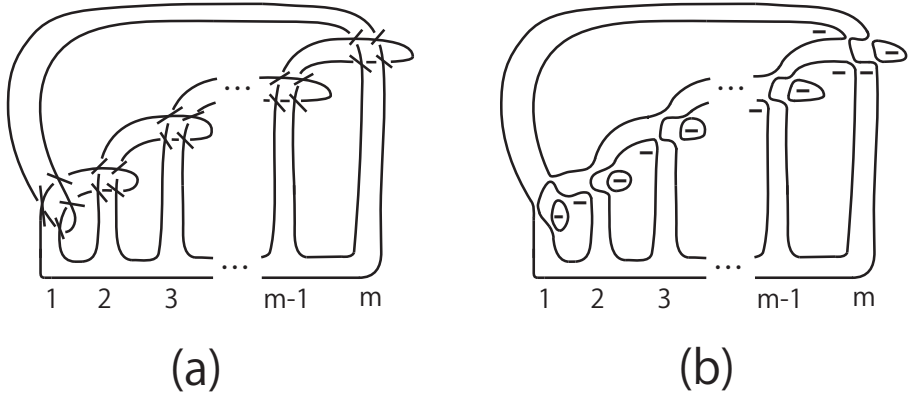}
\caption{(a) the state $s$ of $K_m$ (each short edge indicates the direction of smoothing of a crossing) and (b) the enhanced state $S$ of $K_m$ (each sign indicates a sign of a circle).}\label{nstateR}
\end{figure} 
  
Note that by the definition of this $\mathbb{Z}$-homology, $S_m$ obtains the minimum number of degree $i$ is $-2m$ and the minimum number of degree $j$ is $-4m-1$ as follows: 
\begin{align*}
&w(D_m) = 0, \\
&i(S_m) = \frac{0-4m}{2}= -2m,~{\textrm{and}}~\\
&j(S_m) = 0+(-2m)+(-1-2m) = -4m-1.
\end{align*}
Note also that by the definition of the differential $d : \mathcal{C}^{i, j}(D)$ $\to$ $\mathcal{C}^{i+1, j}(D)$, $d^{-2m} (S_m)$ $=$ $0$ and ${\operatorname{Im}} d^{-2m-1}=0$.  Then, for each $m$ ($\ge 2$), 
\begin{equation}\label{eq2n}
\mathcal{H}^{-2m, -4m-1}(K_m)= \mathbb{Z}.  
\end{equation}
By Lemma~\ref{lemma1}, 
\[
v_{n, j} (K)(t, x) = \sum_i t^i \rk \mathcal{H}^{i, j} (K)\, x^j.  
\]
We focus on the minimum number of $i$ that is $-2m$, and the minimum number of $j$ that is $-4m-1$.  
Setting $i=-2m$ and $j=-4m-1$, the coefficient of $t^{-2m} x^{-4m-1}$ in $v_{n, -4m-1}(K)(t, x)$ is  
\[
\frac{(-4m-1)^n}{n!} \rk \mathcal{H}^{-2m, -4m-1} (K).
\]
Then, (\ref{eq2n}) implies
\[
\frac{(-4m-1)^n}{n!} \rk \mathcal{H}^{-2m, -4m-1} (K_m) = \frac{(-4m-1)^n}{n!}.
\]
%Using Lemma~\ref{lemma1}, for a fixed $n$, if we set $i=-2m$, then the coefficient of $t^{-2m} x^j$ of $v_n (K_m) (t)$ is $v_{n, j} (K)(t, x)$ that is 
%\[
%\frac{j^n}{n!} \rk \mathcal{H}^{-2m, j} (K_m).    
%\]
%Further, we focus on the minimum number of $j$ that is $-4m-1$.  By the definition of Khovanov homology, 
%If we set $j=-4m-1$, then the above polynomial $\sum_j j \rk \mathcal{H}^{-2m, j} (K_m)$ is
%\[
%(-4m-1) \rk \mathcal{H}^{-2m, -4m-1} (K_m) \quad (= -4m-1).   
%\]

Thus, for every pair $l, m$ ($2 \le l < m$), $v_{n, -4l-1} (K_l) (t, x)$ $=$ $\frac{(-4l-1)^n}{n!}$ $\neq$ $\frac{(-4m-1)^n}{n!}$ $=$ $v_{n, -4m-1} (K_m) (t, x)$.  
Here, recall that for the unknot, it is well-known that $Kh(\unknot)(t, q)$ $=$ $q^{-1}$ $+$ $q$, which implies that there is no non-trivial coefficient of $t^k$ $(k \neq 0)$, i.e., any non-trivial part corresponds to the coefficient $q+q^{-1}$, which belongs to the coefficient of $t^0$.  It implies $v_{n, -4l-1} (K_l) (t, x)$ $\neq$ $v_{n, -4l-1} (\unknot) (t, x)$ and $v_{n, -4m-1} (K_m) (t, x)$ $\neq$ $v_{n, -4m-1} (\unknot) (t, x)$.  

Note that for every knot $K_l$ ($2 \le l$), the minimum number of $j$ is $-4l-1$, by always focusing on the lowest degree of $x$ in $v_n (K) (t, x)$ ($=$ $\sum_j v_{n, j} (K) (t, x)$), the above argument works since the coefficient of the lowest degree of $x$ exactly equals $v_{n, -4l-1}(K) (t, x)$.  
Therefore, by focusing the case $j=-4l-1$ or the case $j=-4m-1$, 
for every pair $l, m$ ($2 \le l < m$), $v_{n} (K_l) (t, x)$ $\neq$ $v_{n} (K_m) (t, x)$, $v_{n} (K_l) (t, x)$ $\neq$ $v_{n} (\unknot) (t, x)$ and $v_{n} (K_m) (t, x)$ $\neq$ $v_{n} (\unknot) (t, x)$ since for each case, two lowest degrees are different.
%Thus, for every pair $l, m$ ($2 \le l < m$), $v_{n} (K_l) (t, x)$ $\neq$ $v_{n} (K_m) (t, x)$, $v_{n} (K_l) (t, x)$ $\neq$ $v_{n} (\unknot) (t, x)$ and $v_{n} (K_m) (t, x)$ $\neq$ $v_{n} (\unknot) (t, x)$.   
It completes the proof of Theorem~\ref{main}.   
$\hfill\Box$

\emph{Proof of Remark~\ref{remark1}}.  
Note that the coefficient of $t^{- 2 \cdot 2} x^{-4 \cdot 2 -1}$ in $v_{3, -9} (K_2)(t, x)$ is $\frac{(-4 \cdot 2 -1)^3}{3!}$.  Thus, $v_{3, -9} (K_2)(t, x)$ $\neq$ $v_{3, -9} (\unknot)(t, x)$.  By focusing on the lowest degree of $x$ in $v_3 (K) (t, x)$ ($=$ $\sum_j v_{3, j} (K) (t, x)$), we have the statement of Remark~\ref{remark1}.~$\hfill\Box$

\section{Table}\label{SecTable}
We give some examples of the Khovanov polynomial and the two-variables polynomials for a few prime knots.
We use the data of the Khovanov polynomial in the Mathematica package KnotTheory \cite{KnotAtlas} and attach a Mathematica file to arXiv page.

\begin{align*}
 \begin{array}{|c|c|} \hline
 \text{Knot} & 3_1 \\ \hline
 \text{Kh} & q^9 t^3+q^5 t^2+q^3+q \\ \hline
 v_0 & t^3 x^9+t^2 x^5+x^3+x \\ \hline
 v_1 & 9 t^3 x^9+5 t^2 x^5+3 x^3+x \\ \hline
 v_2 & \frac{81 t^3 x^9}{2}+\frac{25 t^2 x^5}{2}+\frac{9 x^3}{2}+\frac{x}{2}
   \\ \hline
 v_3 & \frac{243 t^3 x^9}{2}+\frac{125 t^2 x^5}{6}+\frac{9 x^3}{2}+\frac{x}{6}
   \\ \hline
 v_4 & \frac{2187 t^3 x^9}{8}+\frac{625 t^2 x^5}{24}+\frac{27
   x^3}{8}+\frac{x}{24} \\ \hline
 v_5 & \frac{19683 t^3 x^9}{40}+\frac{625 t^2 x^5}{24}+\frac{81
   x^3}{40}+\frac{x}{120} \\ \hline
\end{array}
\end{align*}

\begin{align*}
 \begin{array}{|c|c|} \hline
 \text{Knot} & 4_1 \\ \hline
 \text{Kh} & q^5 t^2+\frac{1}{q^5 t^2}+q t+\frac{1}{q t}+q+\frac{1}{q} \\ \hline
 v_0 & t^2 x^5+\frac{1}{t^2 x^5}+t x+\frac{1}{t x}+x+\frac{1}{x} \\ \hline
 v_1 & 5 t^2 x^5-\frac{5}{t^2 x^5}+t x-\frac{1}{t x}+x-\frac{1}{x} \\ \hline
 v_2 & \frac{25 t^2 x^5}{2}+\frac{25}{2 t^2 x^5}+\frac{t x}{2}+\frac{1}{2 t
   x}+\frac{x}{2}+\frac{1}{2 x} \\ \hline
 v_3 & \frac{125 t^2 x^5}{6}-\frac{125}{6 t^2 x^5}+\frac{t x}{6}-\frac{1}{6
   t x}+\frac{x}{6}-\frac{1}{6 x} \\ \hline
 v_4 & \frac{625 t^2 x^5}{24}+\frac{625}{24 t^2 x^5}+\frac{t
   x}{24}+\frac{1}{24 t x}+\frac{x}{24}+\frac{1}{24 x} \\ \hline
 v_5 & \frac{625 t^2 x^5}{24}-\frac{625}{24 t^2 x^5}+\frac{t
   x}{120}-\frac{1}{120 t x}+\frac{x}{120}-\frac{1}{120 x} \\ \hline
\end{array}
\end{align*}

\begin{align*}
 \begin{array}{|c|c|}\hline
 \text{Knot} & 5_1 \\ \hline
 \text{Kh} & q^{15} t^5+q^{11} t^4+q^{11} t^3+q^7 t^2+q^5+q^3 \\ \hline
 v_0 & t^5 x^{15}+t^4 x^{11}+t^3 x^{11}+t^2 x^7+x^5+x^3 \\ \hline
 v_1 & 15 t^5 x^{15}+11 t^4 x^{11}+11 t^3 x^{11}+7 t^2 x^7+5 x^5+3 x^3 \\ \hline
 v_2 & \frac{225 t^5 x^{15}}{2}+\frac{121 t^4 x^{11}}{2}+\frac{121 t^3
   x^{11}}{2}+\frac{49 t^2 x^7}{2}+\frac{25 x^5}{2}+\frac{9 x^3}{2} \\ \hline
 v_3 & \frac{1125 t^5 x^{15}}{2}+\frac{1331 t^4 x^{11}}{6}+\frac{1331 t^3
   x^{11}}{6}+\frac{343 t^2 x^7}{6}+\frac{125 x^5}{6}+\frac{9 x^3}{2} \\ \hline
 v_4 & \frac{16875 t^5 x^{15}}{8}+\frac{14641 t^4 x^{11}}{24}+\frac{14641
   t^3 x^{11}}{24}+\frac{2401 t^2 x^7}{24}+\frac{625 x^5}{24}+\frac{27
   x^3}{8} \\ \hline
 v_5 & \frac{50625 t^5 x^{15}}{8}+\frac{161051 t^4 x^{11}}{120}+\frac{161051
   t^3 x^{11}}{120}+\frac{16807 t^2 x^7}{120}+\frac{625 x^5}{24}+\frac{81
   x^3}{40} \\ \hline
\end{array}
\end{align*}

\begin{align*}
 \begin{array}{|c|c|}\hline
 \text{Knot} & 5_2 \\ \hline
 \text{Kh} & q^{13} t^5+q^9 t^4+q^9 t^3+q^7 t^2+q^5 t^2+q^3 t+q^3+q \\ \hline
 v_0 & t^5 x^{13}+t^4 x^9+t^3 x^9+t^2 x^7+t^2 x^5+t x^3+x^3+x \\ \hline
 v_1 & 13 t^5 x^{13}+9 t^4 x^9+9 t^3 x^9+7 t^2 x^7+5 t^2 x^5+3 t x^3+3 x^3+x
   \\ \hline
 v_2 & \frac{169 t^5 x^{13}}{2}+\frac{81 t^4 x^9}{2}+\frac{81 t^3
   x^9}{2}+\frac{49 t^2 x^7}{2}+\frac{25 t^2 x^5}{2}+\frac{9 t
   x^3}{2}+\frac{9 x^3}{2}+\frac{x}{2} \\ \hline
 v_3 & \frac{2197 t^5 x^{13}}{6}+\frac{243 t^4 x^9}{2}+\frac{243 t^3
   x^9}{2}+\frac{343 t^2 x^7}{6}+\frac{125 t^2 x^5}{6}+\frac{9 t
   x^3}{2}+\frac{9 x^3}{2}+\frac{x}{6} \\ \hline
 v_4 & \frac{28561 t^5 x^{13}}{24}+\frac{2187 t^4 x^9}{8}+\frac{2187 t^3
   x^9}{8}+\frac{2401 t^2 x^7}{24}+\frac{625 t^2 x^5}{24}+\frac{27 t
   x^3}{8}+\frac{27 x^3}{8}+\frac{x}{24} \\ \hline
 v_5 & \frac{371293 t^5 x^{13}}{120}+\frac{19683 t^4 x^9}{40}+\frac{19683
   t^3 x^9}{40}+\frac{16807 t^2 x^7}{120}+\frac{625 t^2 x^5}{24}+\frac{81 t
   x^3}{40}+\frac{81 x^3}{40}+\frac{x}{120} \\ \hline
\end{array}
\end{align*}

\begin{align*}
 \begin{array}{|c|c|}\hline
 \text{Knot} & 6_1 \\ \hline
 \text{Kh} & q^9 t^4+q^5 t^3+q^5 t^2+\frac{1}{q^5 t^2}+q^3 t+q t+\frac{1}{q
   t}+q+\frac{2}{q} \\ \hline
 v_0 & t^4 x^9+t^3 x^5+t^2 x^5+\frac{1}{t^2 x^5}+t x^3+t x+\frac{1}{t
   x}+x+\frac{2}{x} \\ \hline
 v_1 & 9 t^4 x^9+5 t^3 x^5+5 t^2 x^5-\frac{5}{t^2 x^5}+3 t x^3+t
   x-\frac{1}{t x}+x-\frac{2}{x} \\ \hline
 v_2 & \frac{81 t^4 x^9}{2}+\frac{25 t^3 x^5}{2}+\frac{25 t^2
   x^5}{2}+\frac{25}{2 t^2 x^5}+\frac{9 t x^3}{2}+\frac{t x}{2}+\frac{1}{2 t
   x}+\frac{x}{2}+\frac{1}{x} \\ \hline
 v_3 & \frac{243 t^4 x^9}{2}+\frac{125 t^3 x^5}{6}+\frac{125 t^2
   x^5}{6}-\frac{125}{6 t^2 x^5}+\frac{9 t x^3}{2}+\frac{t x}{6}-\frac{1}{6
   t x}+\frac{x}{6}-\frac{1}{3 x} \\ \hline
 v_4 & \frac{2187 t^4 x^9}{8}+\frac{625 t^3 x^5}{24}+\frac{625 t^2
   x^5}{24}+\frac{625}{24 t^2 x^5}+\frac{27 t x^3}{8}+\frac{t
   x}{24}+\frac{1}{24 t x}+\frac{x}{24}+\frac{1}{12 x} \\ \hline
 v_5 & \frac{19683 t^4 x^9}{40}+\frac{625 t^3 x^5}{24}+\frac{625 t^2
   x^5}{24}-\frac{625}{24 t^2 x^5}+\frac{81 t x^3}{40}+\frac{t
   x}{120}-\frac{1}{120 t x}+\frac{x}{120}-\frac{1}{60 x} \\ \hline
\end{array}
\end{align*}

\begin{align*}
 \begin{array}{|c|c|}\hline
 \text{Knot} & 6_2 \\ \hline
 \text{Kh} & q^{11} t^4+q^9 t^3+q^7 t^3+q^7 t^2+q^5 t^2+q^5 t+\frac{1}{q^3
   t^2}+q^3 t+q^3+\frac{q}{t}+2 q \\ \hline
 v_0 & t^4 x^{11}+t^3 x^9+t^3 x^7+t^2 x^7+t^2 x^5+\frac{1}{t^2 x^3}+t x^5+t
   x^3+\frac{x}{t}+x^3+2 x \\ \hline
 v_1 & 11 t^4 x^{11}+9 t^3 x^9+7 t^3 x^7+7 t^2 x^7+5 t^2 x^5-\frac{3}{t^2
   x^3}+5 t x^5+3 t x^3+\frac{x}{t}+3 x^3+2 x \\ \hline
 v_2 & \frac{121 t^4 x^{11}}{2}+\frac{81 t^3 x^9}{2}+\frac{49 t^3
   x^7}{2}+\frac{49 t^2 x^7}{2}+\frac{25 t^2 x^5}{2}+\frac{9}{2 t^2
   x^3}+\frac{25 t x^5}{2}+\frac{9 t x^3}{2}+\frac{x}{2 t}+\frac{9 x^3}{2}+x
   \\ \hline
 v_3 & \frac{1331 t^4 x^{11}}{6}+\frac{243 t^3 x^9}{2}+\frac{343 t^3
   x^7}{6}+\frac{343 t^2 x^7}{6}+\frac{125 t^2 x^5}{6}-\frac{9}{2 t^2
   x^3}+\frac{125 t x^5}{6}+\frac{9 t x^3}{2}+\frac{x}{6 t}+\frac{9
   x^3}{2}+\frac{x}{3} \\ \hline
 v_4 & \frac{14641 t^4 x^{11}}{24}+\frac{2187 t^3 x^9}{8}+\frac{2401 t^3
   x^7}{24}+\frac{2401 t^2 x^7}{24}+\frac{625 t^2 x^5}{24}+\frac{27}{8 t^2
   x^3}+\frac{625 t x^5}{24}+\frac{27 t x^3}{8}+\frac{x}{24 t}+\frac{27
   x^3}{8}+\frac{x}{12} \\ \hline
 v_5 & \frac{161051 t^4 x^{11}}{120}+\frac{19683 t^3 x^9}{40}+\frac{16807
   t^3 x^7}{120}+\frac{16807 t^2 x^7}{120}+\frac{625 t^2
   x^5}{24}-\frac{81}{40 t^2 x^3}+\frac{625 t x^5}{24}+\frac{81 t
   x^3}{40}+\frac{x}{120 t}+\frac{81 x^3}{40}+\frac{x}{60} \\ \hline
\end{array}
\end{align*}

\begin{align*}
 \begin{array}{|c|c|}\hline
 \text{Knot} & 6_3 \\ \hline
 \text{Kh} & q^7 t^3+\frac{1}{q^7 t^3}+q^5 t^2+\frac{1}{q^5 t^2}+q^3
   t^2+\frac{1}{q^3 t^2}+q^3 t+\frac{1}{q^3 t}+q t+\frac{1}{q t}+2
   q+\frac{2}{q} \\ \hline
 v_0 & t^3 x^7+\frac{1}{t^3 x^7}+t^2 x^5+\frac{1}{t^2 x^5}+t^2
   x^3+\frac{1}{t^2 x^3}+t x^3+\frac{1}{t x^3}+t x+\frac{1}{t x}+2
   x+\frac{2}{x} \\ \hline
 v_1 & 7 t^3 x^7-\frac{7}{t^3 x^7}+5 t^2 x^5-\frac{5}{t^2 x^5}+3 t^2
   x^3-\frac{3}{t^2 x^3}+3 t x^3-\frac{3}{t x^3}+t x-\frac{1}{t x}+2
   x-\frac{2}{x} \\ \hline
 v_2 & \frac{49 t^3 x^7}{2}+\frac{49}{2 t^3 x^7}+\frac{25 t^2
   x^5}{2}+\frac{25}{2 t^2 x^5}+\frac{9 t^2 x^3}{2}+\frac{9}{2 t^2
   x^3}+\frac{9 t x^3}{2}+\frac{9}{2 t x^3}+\frac{t x}{2}+\frac{1}{2 t
   x}+x+\frac{1}{x} \\ \hline
 v_3 & \frac{343 t^3 x^7}{6}-\frac{343}{6 t^3 x^7}+\frac{125 t^2
   x^5}{6}-\frac{125}{6 t^2 x^5}+\frac{9 t^2 x^3}{2}-\frac{9}{2 t^2
   x^3}+\frac{9 t x^3}{2}-\frac{9}{2 t x^3}+\frac{t x}{6}-\frac{1}{6 t
   x}+\frac{x}{3}-\frac{1}{3 x} \\ \hline
 v_4 & \frac{2401 t^3 x^7}{24}+\frac{2401}{24 t^3 x^7}+\frac{625 t^2
   x^5}{24}+\frac{625}{24 t^2 x^5}+\frac{27 t^2 x^3}{8}+\frac{27}{8 t^2
   x^3}+\frac{27 t x^3}{8}+\frac{27}{8 t x^3}+\frac{t x}{24}+\frac{1}{24 t
   x}+\frac{x}{12}+\frac{1}{12 x} \\ \hline
 v_5 & \frac{16807 t^3 x^7}{120}-\frac{16807}{120 t^3 x^7}+\frac{625 t^2
   x^5}{24}-\frac{625}{24 t^2 x^5}+\frac{81 t^2 x^3}{40}-\frac{81}{40 t^2
   x^3}+\frac{81 t x^3}{40}-\frac{81}{40 t x^3}+\frac{t x}{120}-\frac{1}{120
   t x}+\frac{x}{60}-\frac{1}{60 x} \\ \hline
\end{array}
\end{align*}

%\section{Acknowledgement}
%The author would like to thank Professor Akishi Kato for his advices.  

\end{document}